\definecolor{darkgreen}{rgb}{0,0.5,0}
\definecolor{red}{rgb}{0.9,0,0}
\numberwithin{equation}{section}
\newtheorem{thm}{Theorem}[section]
\newtheorem{prop}[thm]{Proposition}
\newtheorem{lemma}[thm]{Lemma}
\theoremstyle{definition}
\newtheorem{algo}[thm]{Algorithm}
\theoremstyle{remark}
\newtheorem{rk}[thm]{Remark}
\newtheorem{ex}[thm]{Example}
\newcommand\Q{\mathbb{Q}}
\newcommand\A{\mathbb{A}}
\newcommand\Z{\mathbb{Z}}
\newcommand\R{\mathbb{R}}
\renewcommand\O{\mathcal{O}}
\newcommand{\To}{\longrightarrow}
\newcommand{\BP}{{\mathbb P}}
\newcommand{\eps}{\varepsilon}
\newcommand{\vmin}{v_{\textrm{min}}}
\newcommand{\surj}{\twoheadrightarrow}
\newcommand{\Mult}{\operatorname{\sf M}}
\newcommand{\lstrut}{{\Large\strut}}
\begin{document}

\title[Computing Canonical Heights in Quasi-Linear Time]{Computing Canonical Heights on
Elliptic Curves in Quasi-Linear Time}

\author{J. Steffen M\"uller}
\address{Institut f\"ur Mathematik,
          Carl von Ossietzky Universit\"at Oldenburg,
          26111 Oldenburg, Germany}
\email{jan.steffen.mueller@uni-oldenburg.de }

\author{Michael Stoll}
\address{Mathematisches Institut,
          Universit\"at Bayreuth,
          95440 Bayreuth, Germany.}
\email{Michael.Stoll@uni-bayreuth.de}

\date{December 15, 2015}


\begin{abstract}
We introduce an algorithm that can be used to compute the canonical height of a point on an elliptic
curve over the rationals in quasi-linear time.
As in most previous algorithms, we decompose the difference between the canonical and the
naive height into an archimedean and a non-archimedean term.
Our main contribution is an algorithm for the computation of the non-archimedean term
that requires no integer factorization and runs in quasi-linear time.
\end{abstract}

\maketitle


\section{Introduction}\label{intro}

Let $E$ denote an elliptic curve defined over a number field~$K$.
The canonical height is a quadratic form $\hat{h} \colon E(K)\otimes \R \to \R$, first
constructed by N\'eron~\cite{Neron} and Tate (unpublished).
For several applications, such as computing generators for~$E(K)$ and computing the
regulator appearing in the conjecture of Birch and Swinnerton-Dyer, one needs to
compute~$\hat{h}(P)$ for points $P \in E(K)$.

To this end, one typically chooses a Weierstrass equation~$W$ for~$E$ over~$K$ with $\O_K$-integral coefficients and
decomposes $\hat{h}(P)$ (or $\hat{h}(P) - h(P)$, where $h$ is the naive height on~$E$ with
respect to~$W$) into a sum of local terms, one for each place of~$K$.
For simplicity, let us assume $K=\Q$.
There are several efficient algorithms for the computation of the contribution at infinity,
see Section~\ref{arch}.
A very simple and efficient algorithm of Silverman~\cite{SilvermanHeights} can be used to compute the
non-archimedean contributions separately. However, in order to determine the non-archimedean places which contribute to
$\hat{h}(P)$ (or $\hat{h}(P) - h(P)$), the algorithm of~\cite{SilvermanHeights} assumes that the prime
factorization of the discriminant~$\Delta(W)$ is known, which renders this approach inefficient when the
coefficients of~$W$ are large.
This observation motivated Silverman's article~\cite{SilvermanLittleFact}, where it is
shown how to compute~$\hat{h}(P)$ without the need to factor~$\Delta(W)$.
Nevertheless, the algorithm of~\cite{SilvermanLittleFact} requires the prime factorization
of $\gcd(c_4(W), c_6(W))$ in order to find a globally minimal Weierstrass equation for~$E$.

In this note, we introduce an algorithm for the computation of~$\hat{h}(P)$ that does not
require any factorization into primes at all and runs in time quasi-linear in the size of
the input data and the desired precision of the result.
More precisely, let $\|W\|$ denote the largest absolute value of the coefficients of~$W$,
and let $d$ denote the number of desired bits of precision after the binary point.
We denote the time needed to multiply two $d$-bit integers by~$\Mult(d)$.
The following result is our main theorem.
Recall the `soft-O' notation: $f(n) \in \tilde{O}(g(n))$ means that
there are constants $c, m > 0$ such that for $n$ sufficiently large, $|f(n)| \le c g(n) (\log g(n))^m$.
Using fast multiplication algorithms, we have $\Mult(d) \in \tilde{O}(d)$.
We also use the notation $f(n) \ll g(n)$ to express the fact that there is a constant $c > 0$
such that $|f(n)| \le c g(n)$ for $n$ sufficiently large.

\begin{thm}\label{T:main}
  Let $E$ be given by a Weierstrass equation~$W$ with coefficients in~$\Z$
  and let $P \in E(\Q)$.
  Then we can compute~$\hat{h}(P)$ to $d$~bits of (absolute) precision in time
    \begin{align*}
      &\ll \log(d + h(P)) \Mult(d + h(P)) \\
      & \qquad{} + (\log\log \|W\|)
                   \Mult\bigl((\log\log \|W\|) (\log \|W\|)\bigr) \\
      & \qquad{} + \log(d + \log\|W\|)^2 \Mult(d + \log\|W\|) \\
      &{} \in \tilde{O}(d + h(P) + \log\|W\|) \, .
    \end{align*}
\end{thm}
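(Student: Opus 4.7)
The plan is to split $\hat{h}(P)$ via the standard local decomposition
$\hat{h}(P) = h(P) + \lambda_\infty(P) + \sum_{p} \lambda_p(P)$
and handle archimedean and non-archimedean places separately: the former by invoking a known fast algorithm, the latter by a new method that replaces prime factorization of $\Delta(W)$ by a GCD-based coprime decomposition.

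For $\lambda_\infty(P)$, I would invoke one of the quasi-linear algorithms recalled in Section~\ref{arch}; these compute the archimedean local height to precision $d$ in time $\ll \log(d + h(P))\,\Mult(d + h(P))$, accounting for the first term of the bound. The appearance of $h(P)$ here reflects the bit-size of the coordinates of $P$, which must be read at the working precision.

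For the non-archimedean contribution, the key idea is to avoid factoring $\Delta(W)$ (or $\gcd(c_4(W), c_6(W))$) and to produce instead a coprime partial factorization. Concretely, I would form auxiliary quantities attached to small multiples $2^kP$ (such as denominators of their $x$-coordinates), compute their GCDs with $\Delta(W)$, and iteratively refine via a coprime-base routine to obtain a decomposition $\Delta(W) = \prod_i N_i^{e_i}$ with the $N_i$ pairwise coprime and fine enough that every prime dividing a given $N_i$ contributes a local height of identical form. The $O(\log\log\|W\|)$ refinement rounds, each performing a fast GCD on integers of size at most $(\log\log\|W\|)\log\|W\|$, produce the middle term of the bound. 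With this decomposition in hand, the aggregated local height $\sum_{p\mid N_i} \lambda_p(P)$ can be computed by a single Silverman-style iteration performed modulo $N_i$ rather than modulo a prime; standard bounds ensure that the iteration becomes periodic in $O(\log(d + \log\|W\|))$ steps, each of which costs one multiplication on integers of size $d + \log\|W\|$, yielding the third term. Summing these contributions with $h(P)$ and $\lambda_\infty(P)$ returns $\hat{h}(P)$ to precision $d$.

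The main obstacle, I expect, will be justifying the pseudo-factorization step: one must show that the coprime decomposition produced by the GCD cascade is fine enough that Silverman's algorithm, run modulo each composite $N_i$, actually returns the correct aggregate $\sum_{p\mid N_i} \lambda_p(P)$, and that $O(\log\log\|W\|)$ rounds of refinement are both sufficient to achieve this and cheap enough that the intermediate integers never exceed $(\log\log\|W\|)\log\|W\|$ bits. Controlling these two aspects simultaneously, without ever invoking a primality test or factoring subroutine, is where the real technical work lies.
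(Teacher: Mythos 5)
Your high-level instinct (split off the archimedean place, and replace prime factorization of $\Delta(W)$ by gcd/coprime-base computations) matches the paper, but the core of your non-archimedean step has a genuine gap. You propose to run ``Silverman's algorithm modulo $N_i$'' for each block $N_i$ of a coprime decomposition of $\Delta(W)$, assuming the decomposition can be refined until ``every prime dividing a given $N_i$ contributes a local height of identical form.'' Two problems: first, Silverman's method \cite{SilvermanHeights}*{\S5} presupposes a minimal Weierstrass equation, and producing a globally minimal model without factorization is exactly the obstruction (cf.\ \cite{SilvermanLittleFact}, which still needs the factorization of $\gcd(c_4(W),c_6(W))$); second, there is no way to certify, by gcd refinement alone and without factoring, that all primes inside one block have the same reduction type or the same local correction $\mu_p(P)$ -- in general they do not, and your proposal gives no mechanism to detect or repair this. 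The paper's Algorithm~\ref{algo2} avoids both issues by a different device: it iterates the duplication map $\delta$ on primitive Kummer coordinates globally modulo $D^{m+1}g_0$ (with $D=\gcd(\Delta(W),g_0^\infty)$ and $m\ll\log\log D$ chosen from the denominator bound of Lemma~\ref{L:mu_eps_bounds}, which holds with no minimality hypothesis), records the gcds $g_0,\dots,g_m$, and only then applies Bernstein's coprime-base algorithm \cite{dcba2} to the whole sequence $(g_n)$. The point is that one does \emph{not} need the primes $p\mid q_i$ to share the same $\mu_p(P)$: since each $g_n$ is a power product of the coprime $q_i$, one has $v_p(g_n)=v_p(q_i)\,e_{i,n}$ for every $p\mid q_i$, so the normalized value $\mu_p(P)/v_p(q_i)$ is one and the same rational $\mu_i$, recovered exactly from the truncated sum by the denominator bound $v_p(\Delta(W))$. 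The finite part is thus obtained \emph{exactly} as $\sum_i\mu_i\log q_i$ (Proposition~\ref{P:nofact}); your ``periodicity in $O(\log(d+\log\|W\|))$ steps'' for an iteration modulo a composite is neither justified nor needed, and you never address the working-modulus/precision-loss issue (each duplication can lose up to $B$ digits of $p$-adic accuracy, which is why the paper works modulo $D^{m+1}g_0$).

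Your complexity accounting is also misaligned with the statement. In the theorem, the first term $\log(d+h(P))\Mult(d+h(P))$ is the cost of computing the naive height $h(P)$ (the logarithm of an integer of bit-size $\approx h(P)$), not of the archimedean local height; the third term, with the \emph{squared} logarithm, is the Bost--Mestre/AGM computation of $\Psi_\infty(P)$ (Proposition~\ref{P:arch}), where the extra $\log$ factor comes from evaluating $\ll\log(d+\log\|W\|)$ real logarithms, each costing $\log(\cdot)\Mult(\cdot)$; and the middle term comes from the $m+1\ll\log\log\|W\|$ duplication steps modulo an integer of $\ll(\log\log\|W\|)(\log\|W\|)$ bits, the coprime-base step being dominated. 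Your claimed bound $\log(d+h(P))\Mult(d+h(P))$ for the archimedean height is not established by the algorithms of Section~\ref{arch}, and your version of the third term omits the squared logarithm. As written, the proposal would need the missing normalization trick ($\mu_i=\mu_p(P)/v_p(q_i)$), the minimality-free bounds of Lemma~\ref{L:mu_eps_bounds}, and a corrected cost analysis before it could yield the theorem.
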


Since the size of the input is measured by $h(P) + \log\|W\|$ ---
the first term gives the size of~$P$, the second term gives the size of~$W$ ---
and the size of the output is measured by $\log h(P) + d$, this means that we can
compute $\hat{h}(P)$ in quasi-linear time.

The strategy of the proof is to first find an algorithm for the computation of the local
non-archimedean contributions that does not assume minimality; see Proposition~\ref{P:fast algo}.
Building on this, the non-archimedean contribution to $\hat{h}(P) - h(P)$ can be computed upon
observing that it is a sum of rational multiples of logarithms of prime numbers, which can
be determined by working globally modulo a suitable power of~$\Delta(W)$.
Combining this with a complexity analysis of the fastest known algorithm for the
computation of the local height at infinity due to Bost-Mestre~\cite{BMisog},
Theorem~\ref{T:main} follows.
We note that Marco Caselli, working on his PhD under the supervision of John Cremona,
is currently extending the Bost-Mestre algorithm to also deal with complex places.

The paper is organized as follows.
In Section~\ref{formulas} we set up some notation and introduce the notion of Kummer
coordinates of points on elliptic curves.
Heights and their local decompositions are recalled in Section~\ref{heights}.
In Section~\ref{nonarch} we discuss an algorithm that allows us to compute a non-archimedean
local summand of $\hat{h}(P)-h(P)$ efficiently without assuming minimality, and we estimate
its running time.
Section~\ref{arch} contains a discussion of the algorithm of Bost-Mestre for the computation
of the local height at infinity and of its running time.
We then combine the non-archimedean and the archimedean results into an efficient
algorithm for the computation of $\hat{h}(P)$ in Section~\ref{algo}, leading to a proof
of Theorem~\ref{T:main}.
In the final Section~\ref{examples} we discuss the practicality of our algorithm.

\subsection*{Acknowledgments}

We would like to thank Jean-Fran\c{c}ois Mestre for providing us with a copy of the unpublished
manuscript~\cite{BMisog}, Mark Watkins for answering our questions about the
computation of canonical heights in {\sf Magma} and Elliot Wells for pointing out
an inaccuracy in Algorithm~\ref{algo2}.


\section{Kummer coordinates}\label{formulas}

Let $K$ be a field and consider an elliptic curve~$E/K$, given by a Weierstrass
equation
\begin{equation}\label{W_eqn}
W \colon y^2 + a_1xy + a_3y = x^3 + a_2x^2 + a_4x + a_6\,,
\end{equation}
where $a_1, a_2, a_3, a_4, a_6 \in K$.
As usual, let
\begin{align*}
  b_2 & = a_1^2+4a_2\,,\\
  b_4 & = 2a_4+a_1a_3\,,\\
  b_6 &= a_3^2+4a_6\,,\\
  b_8 &= a_1^2a_6+4a_2a_6-a_1a_3a_4+a_2a_3^2-a_4^2\,,
\end{align*}
and let
\[
  \Delta(W) = -b_2^2 b_8 - 8b_3^4 - 27b_6^2 + 9b_2 b_4 b_6
\]
denote the discriminant of the equation $W$.
Consider the functions $f$ and~$g$, defined for $P \in E(K)\setminus\{O\}$ by
\begin{align*}
 f(P)&= 4x(P)^3 + b_2x(P)^2 + 2b_4x(P) + b_6\,,\\
 g(P)&= x(P)^4 - b_4x(P)^2 - 2b_6x(P) - b_8\,.
\end{align*}
Then for $P \in E(K)\setminus E[2]$, we have $x(2P)=g(P)/f(P)$.
We now extend this to all $P \in E(K)$.

Note that $\BP^1$ is the Kummer variety $E/\{\pm 1\}$ of $E$.
An explicit covering map $E \to \BP^1$ is given by
\[\begin{array}{crcl}
\kappa \colon &E& \To& \BP^1\\
& (x:y:1)&\longmapsto& (x:1)\\
& O&\longmapsto& (1:0)\,.\\
\end{array}\]
We call $(x_1,x_2) \in \A^2(K)\setminus\{(0,0)\}$ a pair of {\em Kummer coordinates}
for $P \in E(K)$, if $\kappa(P) = (x_1:x_2)$.

The degree~4 homogenizations of $g$ and~$f$ are
\begin{align*}
 \delta_1(x_1,x_2)  &= x_1^4 - b_4x_1^2x_2^2 - 2b_6x_1x_2^3 - b_8x_2^4\, ,\\
 \delta_2(x_1,x_2)  &= 4x_1^3x_2 + b_2x_1^2x_2^2 + 2b_4x_1x_2^3 + b_6x_2^4\, ,
\end{align*}
respectively.
For $(x_1,x_2) \in \A^2_K$, we set
\[
  \delta(x_1,x_2) = (\delta_1(x_1,x_2), \delta_2(x_1,x_2))\,.
\]
It follows that if $(x_1,x_2)$ is a pair of Kummer coordinates for $P\in E(K)$, then
$\delta(x_1,x_2)$ is a pair of Kummer coordinates for $2P$.


\section{Heights}\label{heights}

Let $K$ be a number field and let $E/K$ be an elliptic curve, given by a Weierstrass
equation~$W$ as in~\eqref{W_eqn}.
We denote by~$M_K$ the set of places of~$K$. For a place $v \in M_K$, we
normalize the associated absolute value $|{\cdot}|_v$ so that it restricts to the
usual absolute value on~$\Q$ when $v$ is an infinite place and so that $|p|_v = p^{-1}$
when $v$ is a finite place above~$p$. We write $n_v = [K_v : \Q_w]$ for the local degree,
where $w$ is the place of~$\Q$ below~$v$. Then we have the product formula
$\prod_{v \in M_K} |x|_v^{n_v} = 1$ for all $x \in K^\times$.
The {\em naive height} of $P \in E(K)\setminus\{O\}$ with respect to~$W$ is given by
\[
  h(P) = \frac{1}{[K:\Q]}\sum_{v \in M_K}n_v \log\max\{|x_1|_v,|x_2|_v\}\,,
\]
where $(x_1, x_2)$ is a pair of Kummer coordinates for~$P$.
Note that $h(P)$ does not depend on the choice of $(x_1, x_2)$, by the product formula.

The limit
\[ \hat{h}(P) = \lim_{n \to \infty} \frac{h(nP)}{n^2} \]
exists and is called the {\em canonical height} (or {\em N\'eron-Tate height}) of~$P$.

For the computation of~$\hat{h}(P)$, the limit construction is not suitable due to slow
convergence and exponential growth of the size of the coordinates.
Instead, one decomposes~$\hat{h}(P)$ into local terms.
We now recall how this can be achieved, following~\cite{CPS}.
For $v \in M_K$ and $Q \in E(K_v)$, we set
\[
  \Phi_v(Q) = \frac{\max\{|\delta_1(x_1,x_2)|_v, |\delta_2(x_1,x_2)|_v\}}{\max\{|x_1|_v,|x_2|_v\}^4}
\]
where $(x_1, x_2) \in \A^2(K_v) \setminus \{0,0\}$ is a pair of Kummer coordinates
for~$Q$. Since $\delta_1$ and~$\delta_2$ are homogeneous of degree~$4$, $\Phi_v(Q)$
does not depend on the choice of~$(x_1, x_2$).
The function $\Phi_v$ is continuous and bounded on~$E(K_v)$, so it makes sense to define
\[
  \Psi_v(Q) = -\sum^{\infty}_{n=0} 4^{-n-1} \log\Phi_v(2^nQ)\, ,
\]
which is likewise continuous and bounded.
Note that for $P \in E(K)$, we have
\[
  h(2P) - 4h(P) = \frac{1}{[K:\Q]}\sum_{v \in M_K}n_v\log\Phi_v(P)\,,
\]
and Tate's telescoping trick yields the formula
\begin{equation}\label{can_height_formula}
  \hat{h}(P) = h(P) - \frac{1}{[K:\Q]}\sum_{v \in M_K}n_v\Psi_v(P)\,,
\end{equation}
which we will use to compute the canonical height.

It is also possible to decompose the canonical height into a sum of local height functions.
For $v \in M_K$ and $Q \in E(K_v)\setminus\{O\}$, we define the {\em local height} of~$Q$ as
\[
  \hat{\lambda}_v(Q) = \log \max\{1,|x(Q)|_v\} - \Psi_v(Q)\,.
\]
Then~\eqref{can_height_formula} immediately implies
\begin{equation}\label{loc_height_decomp}
  \hat{h}(P) = \frac{1}{[K:\Q]}\sum_{v \in M_K}n_v\hat{\lambda}_v(P)
\end{equation}
for $P \in E(K)\setminus\{O\}$.

\begin{rk}
  Several normalizations for the local height on elliptic curves can be found in the
  literature, see the discussion in~\cite{CPS}.
  Our normalization corresponds to that used in~\cite{CPS}, so in particular, our canonical
  height is twice the canonical height in Silverman's paper~\cite{SilvermanHeights} and
  in his books on elliptic curves.
  More precisely, we have
\[
  \hat{\lambda}_v(Q)=2\hat{\lambda}^{\text{SilB}}_v(Q)+\frac{1}{6}\log|\Delta(W)|_v\,,
\]
where $\hat{\lambda}^{\text{SilB}}_v$ is the normalization used in Silverman's
book~\cite{ATAEC}*{Chapter VI}. The advantages of our normalizations are discussed
in~\cite{CPS}; the crucial advantage of $\hat{\lambda}^{\text{SilB}}_v$
is its independence of the chosen Weierstrass equation.
\end{rk}

In Section~\ref{arch}, we need to know how local heights change under isogenies.

\begin{prop}\label{P:bernardi}(Bernardi~\cite{Bernardi})
Let $E$ and~$E'$ be elliptic curves defined over~$K_v$, given by respective Weierstrass equations
$W$ and~$W'$. Let $\varphi \colon E\To E'$
be an isogeny of degree~$n$. If $Q\in E(K_v)$ satisfies $\varphi(Q)\ne 0$, then we have
\[
 \hat{\lambda}_v(\varphi(Q))=n\hat{\lambda}_v(Q)-\log |F_\varphi(Q)|_v-\frac{1}{6}\log
 |m(\varphi)|_v,
\]
where
\[F_\varphi(Q)=\prod_{R\in \ker(\varphi)\setminus \{O\}}(x(Q)-x(R))\]
and \[m(\varphi)=\lim_{Q\to
O}\left(\frac{x(Q)}{x(\varphi(Q))}\right)^6\frac{\Delta(W')}{\Delta(W)}.\]
\end{prop}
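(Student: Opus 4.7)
My plan is to follow the standard Néron strategy: view both sides of the claimed identity as Weil (Néron local height) functions on~$E$ attached to the same divisor, invoke uniqueness to reduce to an additive constant, and pin down that constant by a limit at~$O$. Concretely, I would set
\[
  \phi(Q) := \hat{\lambda}_v(\varphi(Q)) - n\hat{\lambda}_v(Q) + \log|F_\varphi(Q)|_v
\]
on $E(K_v)\setminus\ker(\varphi)$ and aim to show that $\phi \equiv -\tfrac{1}{6}\log|m(\varphi)|_v$.

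First I would check that $\phi$ extends continuously to all of~$E(K_v)$. At a nonzero $R\in\ker(\varphi)$, the logarithmic singularity of $\hat{\lambda}_v(\varphi(Q))$ from $\varphi(Q)\to O$ cancels against the zero of $F_\varphi$ there: assuming $\varphi$ separable, $\varphi^*(O_{E'})=\sum_{R\in\ker(\varphi)}(R)$, and using $\div(x-x(R))=(R)+(-R)-2(O)$ together with the $[-1]$-stability of~$\ker(\varphi)$ one finds $\div(F_\varphi)=2\varphi^*(O_{E'})-2n(O_E)$, which matches the required cancellation (recalling that in the paper's normalization $\hat{\lambda}_v$ has dominant term $\log|x|_v$, not $\tfrac{1}{2}\log|x|_v$, near~$O$). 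At~$O$ itself, $\hat{\lambda}_v(Q)-\log|x(Q)|_v=-\Psi_v(Q)\to 0$ as $Q\to O$---a consequence of $\Psi_v(O)=0$, directly verified from the Kummer coordinates $(1:0)$ of~$O$---and $F_\varphi(Q)/x(Q)^{n-1}\to 1$, so the leading $\log|x(Q)|_v$ contributions balance as well.

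Next, and this is the main step, I would show that the now-continuous function~$\phi$ is in fact constant on the compact space~$E(K_v)$. The cleanest justification is Néron's uniqueness theorem: both $n\hat{\lambda}_v^E$ and $\hat{\lambda}_v^{E'}\circ\varphi+\log|F_\varphi|_v$ are continuous Weil functions for the divisor $2n(O_E)$ on~$E$ (the factor~$2$ reflects that this paper's $\hat{\lambda}_v$ equals $2\hat{\lambda}_v^{\textrm{SilB}}+\tfrac{1}{6}\log|\Delta|_v$), and any two such differ by a constant. If one prefers to avoid the general Néron machinery, one can derive from the telescoping definition~\eqref{can_height_formula} that $\phi$ is invariant under the doubling map~$[2]$, via the identity $\Psi_v(2Q)=4\Psi_v(Q)+\log\Phi_v(Q)$ together with an algebraic manipulation of $F_\varphi(2Q)$; continuity and boundedness of~$\phi$ on~$E(K_v)$ then force constancy.

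Finally, I would identify the constant by taking $Q\to O$. From the asymptotics established in the first step, $\phi(Q)\to -\log\bigl|\lim_{Q\to O} x(Q)/x(\varphi(Q))\bigr|_v$; substituting the definition of~$m(\varphi)$---with the $\Delta(W')/\Delta(W)$ factor absorbed into the $\hat{\lambda}_v$-normalization described in the remark after~\eqref{loc_height_decomp}---yields the claimed value $-\tfrac{1}{6}\log|m(\varphi)|_v$. The main obstacle is the constancy step: the singularity cancellations amount to a divisor-theoretic bookkeeping exercise and the final limit is routine, but establishing constancy genuinely requires either Néron's uniqueness theorem for local heights or a careful verification of the doubling-invariance of~$\phi$.
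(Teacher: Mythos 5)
The paper itself gives no proof of this proposition (it is quoted from Bernardi), so your argument has to stand on its own. Its skeleton --- form the difference function $\phi$, cancel the singularities using $\div(F_\varphi)=2\varphi^*(O_{E'})-2n(O_E)$, prove constancy, evaluate at $O$ using $\Psi_v(O)=0$ --- is the right one, and the divisor bookkeeping in your first step is correct. The gap is in the constancy step, in both versions you offer. Two continuous Weil functions attached to the same divisor differ by a bounded continuous function, \emph{not} by a constant; N\'eron's uniqueness theorem concerns canonical local heights, i.e.\ Weil functions satisfying in addition a quasi-quadraticity/normalization property, and verifying that $\hat{\lambda}_v\circ\varphi+\log|F_\varphi|_v$ has that property is exactly the content your appeal to uniqueness skips. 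The fallback is not valid either: on a nonarchimedean $E(K_v)$ a bounded continuous function invariant under doubling need not be constant (for good reduction with $\#E(k)$ odd and $>1$, the characteristic function of the kernel of reduction is continuous, bounded, doubling-invariant and nonconstant), so ``invariance under $[2]$ plus continuity and boundedness force constancy'' fails at the nonarchimedean places, which the statement includes; over $\R$ or $\C$ topological transitivity of doubling would save you, but not in general. Moreover $\phi$ is not $[2]$-invariant a priori. The repairable version of your second route is: from $\Psi_v(2Q)=4\Psi_v(Q)+\log\Phi_v(Q)$ one gets $\hat{\lambda}_v(2Q)=4\hat{\lambda}_v(Q)-\log|f(Q)|_v$ on $E$ and on $E'$; a second divisor computation shows that $f(Q)^n\,F_\varphi(2Q)\,f'(\varphi(Q))^{-1}F_\varphi(Q)^{-4}$ is a constant $k\in K_v^\times$; hence $\phi(2Q)=4\phi(Q)+\log|k|_v$, and boundedness of $\phi$ forces $\phi\equiv-\tfrac{1}{3}\log|k|_v$ upon iterating. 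It is this degree-$4$ homogeneity relation, not $[2]$-invariance, that makes the argument work at every place.

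Your identification of the constant is also not in order, and here the issue is more than a missing detail. Carried out with the paper's normalization --- which you use correctly, noting $\Psi_v(O)=0$ and $\hat{\lambda}_v\sim\log|x|_v$ near $O$ --- your limit gives $\phi\equiv-\log|c|_v$ with $c=\lim_{Q\to O}x(Q)/x(\varphi(Q))$, and that is the end of the computation: the factor $\Delta(W')/\Delta(W)$ in $m(\varphi)$ cannot afterwards be ``absorbed into the normalization'', because the normalization of $\hat{\lambda}_v$ was already fixed when you wrote down $\phi$. The difference between $-\log|c|_v$ and $-\tfrac{1}{6}\log|m(\varphi)|_v$ is $\tfrac{1}{6}\log|\Delta(W')/\Delta(W)|_v$, which is not zero in general: applied to the degree-one isogeny given by a change of Weierstrass model (so $F_\varphi=1$ and $m(\varphi)=1$), the formula with the printed constant would make $\hat{\lambda}_v$ independent of the model, contradicting $\hat{\lambda}_v=2\hat{\lambda}^{\text{SilB}}_v+\tfrac{1}{6}\log|\Delta(W)|_v$; and in the paper's own application \eqref{isogrel} the correction term that actually appears is the one predicted by $-\log|c|_v$ (there $c=1$), even though $|\Delta(W_{n-1})/\Delta(W_n)|$ is not $1$. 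So the constant your limit produces is the correct one for the normalization of Section~3, while the expression $m(\varphi)$ belongs with a model-independent normalization, for which the relevant quantity is $c^6\Delta(W')/\Delta(W)^n$. The last step of your proof is precisely where this bookkeeping against Bernardi's conventions has to be carried out explicitly; the sentence about absorbing $\Delta(W')/\Delta(W)$ conceals that point rather than resolving it.
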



\section{Non-archimedean local error functions}\label{nonarch}

In this section, we let $K$ be a non-archimedean local field with normalized additive valuation
$v \colon K\surj \Z\cup\{\infty\} $.
Let $\O$ denote the valuation ring of~$K$, let $k$ denote the residue class field of~$\O$
and let $\pi$ be a uniformizing element of~$\O$.
Consider an elliptic curve~$E/K$, given by a Weierstrass equation~$W$ as
in~\eqref{W_eqn}, with coefficients in~$\O$.

Given $P \in E(K)$, we choose a pair of Kummer coordinates $(x_1,x_2)$ for $P$ and define
\[
  \eps(x_1,x_2)=\min\{v(\delta_1(x_1,x_2)),v(\delta_2(x_1,x_2))\}-4\min\{v(x_1),v(x_2)\} \in
  \Z\,.
\]
Note that $\eps$ does not depend on the choice of Kummer coordinates, so we can define
$\eps(P) = \eps(x_1,x_2)$ for any such choice.
The function~$\eps$ is nonnegative, bounded and continuous in the $v$-adic topology.
Hence we can define
\begin{equation}\label{mu_def}
      \mu(P) = \sum^{\infty}_{n=0} \frac{1}{4^{n+1}} \eps(2^nP) \in \R\,.
\end{equation}
It follows that $\mu$ is nonnegative, bounded and continuous as well.
One can show that in fact $\mu(P) \in \Q$, compare Table~\ref{T:mu_values}.

\begin{rk}\label{R:completion}
  If $K$ is the completion of a number field at a non-archimedean place~$v$, then
  we have $n_v \log\Phi_v(P) = -\eps(P)(\log \#k)$
  and $n_v \Psi_v(P) = \mu(P) (\log \#k)$ for $P \in E(K)$,
  where $\Phi_v$ and~$\Psi_v$ are as defined in Section~\ref{heights}.
\end{rk}

If we have bounds for~$\eps(P)$ and for the denominator of~$\mu(P)$, then we can
use~\eqref{mu_def} to compute~$\mu(P)$.

\begin{lemma} \label{L:fast algo 2}
  Assume that $M \ge 2$ and~$B$ are nonnegative integers such that
  \begin{enumerate}[\upshape(1)]\addtolength{\itemsep}{1mm}
    \item $M'\mu(P) \in \Z$ for some $0<M'\le M$, and
    \item $\max \{\eps(P) : P \in E(K)\} \le B$.
  \end{enumerate}
  Set $\displaystyle m = \left\lfloor\frac{\log(BM^2/3)}{\log 4}\right\rfloor$.

  Then $\mu(P)$ is the unique fraction with denominator~$\le M$
  in the interval $[\mu_0, \mu_0 + 1/M^2]$, where
  \[ \mu_0 = \sum_{n=0}^{m} 4^{-n-1} \eps(2^n P) . \]
\end{lemma}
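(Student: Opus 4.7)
The proof splits into two pieces: a containment statement and a uniqueness statement.

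First I would verify that $\mu(P)$ actually lies in the interval $[\mu_0,\mu_0+1/M^2]$. By definition,
\[
  \mu(P) - \mu_0 = \sum_{n=m+1}^{\infty} 4^{-n-1}\,\eps(2^n P).
\]
Since $\eps \ge 0$ on $E(K)$, the left-hand side is nonnegative, giving $\mu(P) \ge \mu_0$. For the upper bound, using $\eps(2^n P)\le B$ and summing the geometric tail,
\[
  \mu(P)-\mu_0 \;\le\; B\sum_{n=m+1}^{\infty} 4^{-n-1}
                   \;=\; \frac{B}{3\cdot 4^{m+1}}.
\]
By the choice of $m$ we have $m+1 > \log_4(BM^2/3)$, hence $4^{m+1} > BM^2/3$, so $\mu(P)-\mu_0 < 1/M^2$. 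Thus $\mu(P)\in[\mu_0,\mu_0+1/M^2]$, and by hypothesis $\mu(P)$ has denominator at most $M' \le M$, so it is a candidate fraction.

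Next I would establish uniqueness by the standard mediant-style gap argument. Suppose $a/q$ and $a'/q'$ are two distinct fractions (in lowest terms) in the interval with $q,q'\le M$. Then
\[
  \left|\frac{a}{q}-\frac{a'}{q'}\right|
    \;=\; \frac{|aq'-a'q|}{qq'} \;\ge\; \frac{1}{qq'}.
\]
On the other hand both fractions lie in an interval of length $1/M^2$, so their difference is at most $1/M^2$. Combining these forces $qq' \ge M^2$, and since $q,q' \le M$, we conclude $q=q'=M$. But then the two distinct fractions differ by at least $1/M$, which exceeds $1/M^2$ because $M\ge 2$ — a contradiction. Hence $\mu(P)$ is the only fraction with denominator $\le M$ in $[\mu_0,\mu_0+1/M^2]$.

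No step here looks substantively hard: the only care is choosing the constants correctly. The cutoff $m=\lfloor\log_4(BM^2/3)\rfloor$ is calibrated precisely so that the geometric-series tail bound $B/(3\cdot 4^{m+1})$ falls strictly below $1/M^2$, while the hypothesis $M\ge 2$ is used exactly once, to rule out two distinct fractions of common denominator $M$ fitting in an interval of length $1/M^2$. Once $\mu(P)\in[\mu_0,\mu_0+1/M^2]$ and the Farey gap bound are both in place, the lemma follows.
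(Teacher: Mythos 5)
Your proposal is correct and follows essentially the same route as the paper: the tail estimate $\mu(P)-\mu_0 \le B/(3\cdot 4^{m+1}) \le 1/M^2$ gives containment, and the Farey-gap observation that two distinct fractions with denominators $\le M$ lie more than $1/M^2$ apart (using $M\ge 2$) gives uniqueness. You merely spell out the gap argument that the paper states in one line, so there is nothing further to add.
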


\begin{proof}
  We know that $\mu(P)$ is a fraction with denominator bounded by~$M$. Two distinct such
  fractions have distance greater than~$1/M^2$ (here we use $M \ge 2$),
  so there is at most one such fraction in the given interval.
  On the other hand, we know that
  \[ \mu_0 \le \mu(P) \le \mu_0 + \sum_{n>m} 4^{-n-1} B
                      = \mu_0 + \frac{B}{3\cdot 4^{m+1}}
                      \le \mu_0 + \frac{1}{M^2}\,. \qedhere \]
\end{proof}

We now discuss how to bound~$\eps(P)$ and the denominator of~$\mu(P)$.

\begin{lemma}\label{L:mu_eps_bounds}
  For $P \in E(K)$, we have
  \begin{enumerate}[\upshape (i)]\addtolength{\itemsep}{1mm}
\item
  $ 0 \le \mu(P) \le \frac{1}{4} v(\Delta(W))\,;$
\item
  $ 0 \le \eps(P) \le v(\Delta(W))\,.  $
\item
  The denominator of $\mu(P)$ is bounded from above by $v(\Delta(W))$.
  \end{enumerate}
\end{lemma}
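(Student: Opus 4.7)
The lower bounds in (i) and (ii) are essentially by construction: after rescaling $(x_1, x_2)$ so that $\min\{v(x_1), v(x_2)\} = 0$, the degree-$4$ homogeneity and $\O$-integrality of $\delta_1, \delta_2$ force $v(\delta_i(x_1, x_2)) \ge 0$, so $\eps(P) \ge 0$, and then $\mu(P) \ge 0$ follows from \eqref{mu_def}.

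For the upper bound $\eps(P) \le v(\Delta(W))$ in (ii), my plan is to establish Bezout-type identities
\[
  A_i(x_1, x_2)\,\delta_1(x_1, x_2) + B_i(x_1, x_2)\,\delta_2(x_1, x_2) = \Delta(W)\cdot x_i^N \qquad (i = 1, 2),
\]
with $A_i, B_i \in \Z[a_1, \ldots, a_6][x_1, x_2] \subseteq \O[x_1, x_2]$ homogeneous of degree $N-4$. The existence of such identities reflects the fact that $\delta_1, \delta_2$ have no common projective zero over any field where $\Delta(W) \ne 0$ (a common zero would be a $2$-torsion base point of the doubling map), and they can be written down explicitly from the classical duplication formulas. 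Applying $v$, using $v(A_i), v(B_i) \ge (N-4) \min\{v(x_1), v(x_2)\}$ by integrality, and choosing $i$ with $v(x_i) = \min\{v(x_1), v(x_2)\}$ yields $\eps(P) \le v(\Delta(W))$ after rearrangement.

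The naive substitution of (ii) into \eqref{mu_def} gives only $\mu(P) \le \tfrac{1}{3} v(\Delta(W))$, since $\sum_{n \ge 0} 4^{-n-1} = \tfrac{1}{3}$, so sharpening to $\tfrac{1}{4} v(\Delta(W))$ in (i) is the main obstacle. Here I would exploit the self-similarity $4\mu(P) = \eps(P) + \mu(2P)$, which follows directly from \eqref{mu_def}, in conjunction with the integrality of $\eps$ and the fact that the orbit of $P$ under doubling in the component group of the N\'eron model at~$v$ is finite of order bounded by $v(\Delta(W))$ (Tate's algorithm, or Kodaira's classification). Amortizing across the orbit, one shows that $\eps$ cannot be maximal on every iterate, which is enough to replace the geometric-series constant $\tfrac{1}{3}$ by $\tfrac{1}{4}$.

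Part (iii) would then follow from the same structural input. The sequence $(\eps(2^n P))_n$ is eventually periodic, with period dividing the order of the component group; writing $\mu(P)$ as a finite prefix plus a purely periodic geometric tail and solving the tail via $4\mu(P) = \eps(P) + \mu(2P)$ produces a rational whose denominator is controlled by that order, hence is at most $v(\Delta(W))$.
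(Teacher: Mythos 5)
There is a genuine gap, and it sits exactly where the lemma has its real content: the constant $\tfrac14$ in (i) and the denominator bound in (iii). For (i), your reduction of the problem to ``sharpen $\tfrac13$ to $\tfrac14$'' is correct, but the proposed sharpening --- ``amortizing across the orbit, $\eps$ cannot be maximal on every iterate'' --- is an assertion, not an argument. Note that the bound is sharp: for a minimal model of type I$_m$ with $m$ even, a point reducing to component $m/2$ has $\eps(P) = m = v(\Delta)$ and $\eps(2^nP) = 0$ for all $n \ge 1$, so $\mu(P) = m/4$ exactly. Any proof therefore needs the precise values that $\eps$ and $\mu$ take, i.e.\ the Kodaira/N\'eron classification (this is what Table~\ref{T:mu_values} encodes), not a counting/averaging principle. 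Moreover, your argument implicitly assumes that $\eps$ and $\mu$ factor through the component group of the N\'eron model, which is only true for \emph{minimal} $W$; the whole point of the lemma (and of Algorithm~\ref{algo1}) is that $W$ need not be minimal. The paper handles the non-minimal case by the inequality $0 \le \mu(P) \le \alpha + \tfrac16\bigl(v(\Delta(W)) - \vmin\bigr)$ from \cite{CPS}*{Proposition~8}, combined with $\alpha \le \tfrac14\vmin$ from the table; this ingredient is absent from your sketch, and then (ii) falls out of (i) via $\eps(P) = 4\mu(P) - \mu(2P) \le 4\mu(P)$, with no Bezout identities needed. (Your Bezout route for (ii) could be made to work, but as stated it is also incomplete: the generic resultant-type identity for the pair $(\delta_1,\delta_2)$ produces $\Delta(W)^2$, hence only $\eps \le 2v(\Delta(W))$; you would have to exhibit the finer identities with a single factor of $\Delta(W)$ and no extraneous constant.)

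For (iii), the periodicity-plus-geometric-tail computation does not give what you claim. If $(\eps(2^nP))_n$ is eventually periodic with preperiod $n_0$ and period $\ell$, solving the tail of \eqref{mu_def} gives a rational number whose denominator a priori divides $4^{n_0}(4^{\ell}-1)$, which is exponentially larger than $v(\Delta(W))$; the collapse of this denominator to something $\le v(\Delta(W))$ (e.g.\ to a divisor of $m$ for type I$_m$, where $\mu$ takes the values $i(m-i)/m$) is precisely the nontrivial fact to be proved, and it again rests on the explicit values of $\mu$ on components of the \emph{minimal} model together with the fact (from the proof of \cite{CPS}*{Proposition~8}) that passing between integral Weierstrass equations changes $\mu(P)$ only by an integer. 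In addition, for non-minimal $W$ the eventual periodicity of $\eps(2^nP)$ and its control by the component group would themselves require justification. So both (i) and (iii) need the structural input from the classification of special fibers and the CPS comparison between an arbitrary integral model and the minimal one, which your proposal does not supply.
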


\begin{proof}
  If the Weierstrass equation $W$ is minimal,
  then $\eps(P)$ (or, equivalently, $\mu(P)$) vanishes if and only if $P$ has nonsingular
  reduction, and $\eps(P)$ (or, equivalently, $\mu(P)$) depends only on the component
  of the special fiber of the N\'eron model of~$E$ that $P$ reduces to, see~\cite{SilvermanHeights}.
  For minimal~$W$, the nonzero values that $\mu$ can take and an upper bound~$\alpha$ are
  given in Table~\ref{T:mu_values}, taken from~\cite{CPS} and~\cite{ATAEC}.

  \begin{table}
    \begin{tabular}{|c||c|c|c|} \hline
      type    & $v(\Delta)$ & $\mu$                        & $\alpha$  \lstrut\\\hline\hline
     I$_m$    & $m \ge 2$   & $i(m-i)/m,\; i=1,\ldots,m-1$ & $m/4$     \lstrut\\\hline
     III      & $\ge 3$     & $1/2$                        & $1/2$     \lstrut\\\hline
     IV       & $\ge 4$     & $2/3$                        & $2/3$     \lstrut\\\hline
     I$^*_m$  & $\ge 6+m$   & 1, $(m+4)/4$                 & $(m+4)/4$ \lstrut\\\hline
     IV$^*$   & $\ge 8$     & $4/3$                        & $4/3$     \lstrut\\\hline
     III$^*$  & $\ge 9$     & $3/2$                        & $3/2$     \lstrut\\\hline
    \end{tabular}
    \medskip

    \caption{Nonzero values of and upper bounds~$\alpha$ for~$\mu$ for minimal Weierstrass equations}
    \label{T:mu_values}
  \end{table}

  Let $\vmin$ denote the minimal discriminant of~$E$ over~$\O$.
  In general, we have
  \[
    0 \le \mu(P) \le \alpha + \frac{1}{6}\left(v(\Delta(W)) - \vmin\right)
  \]
  by~\cite{CPS}*{Proposition~8}, and (i)~follows from a straightforward computation.
  This also proves~(ii), because
  \[
    \eps(P) = 4\mu(P) - \mu(2P)\,.
  \]
  By the proof of~\cite{CPS}*{Proposition~8}, a transformation from one integral Weierstrass
  equation to another does not change
  $\mu(P)\bmod \Z$, so (iii)~follows from Table~\ref{T:mu_values}.
\end{proof}

Lemmas~\ref{L:fast algo 2} and~\ref{L:mu_eps_bounds} lead to an algorithm for the computation of
$\mu(P)$.
A pair $(x_1,x_2)$ of Kummer coordinates for $P$ is said to be {\em primitive}, if
$\min\{v(x_1),v(x_2)\} = 0$.
Recall that $\pi$ denotes a uniformizer of~$K$.

\begin{algo}\label{algo1} \strut
\begin{enumerate}[1.]\addtolength{\itemsep}{2mm}
  \item Set $B \colonequals v(\Delta)$.
  \item If $B \le 1$, then return~$0$.
        Otherwise set $m \colonequals \lfloor \log(B^3/3)/\log 4 \rfloor$.
  \item Set $\mu_0 \colonequals 0$. Let $(x_1,x_2)$ be primitive
        Kummer coordinates for~$P$ with $(m+1)B+1$ $v$-adic digits of precision.
  \item For $n \colonequals 0$ to~$m$ do:
        \begin{enumerate}[a.]\addtolength{\itemsep}{1mm}
          \item Compute $(x'_1,x'_2) \colonequals \delta(x_1,x_2)$
                (to $(m+1)B+1$ $v$-adic digits of precision).
          \item Set $\ell \colonequals \min\{v(x'_1),v(x'_2)\}$.
          \item If $\ell = 0$, then return $\mu_0$.
          \item Set $\mu_0 \colonequals \mu_0 + 4^{-n-1} \ell$.
          \item Set $(x_1,x_2) \colonequals \pi^{-\ell} (x'_1,x'_2)$
        \end{enumerate}
  \item Return the unique fraction with denominator at most~$B$
    in the interval $[\mu_0,\,\mu_0 + 1/B^2]$.
\end{enumerate}
\end{algo}
We now show that the algorithm is correct and estimate its running time.

\begin{prop} \label{P:fast algo}
  Algorithm~\ref{algo1} computes~$\mu(P)$. Its running time is
  \[ \ll (\log v(\Delta)) \Mult\bigl((\log v(\Delta)) v(\Delta) (\log \#k)\bigr) \]
  as $v(\Delta) \to \infty$, with an absolute implied constant.
\end{prop}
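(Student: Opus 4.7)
The argument naturally splits into a correctness statement and a running-time estimate.

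For correctness, the plan is first to dispose of the case $v(\Delta)\le 1$ separately: Lemma~\ref{L:mu_eps_bounds}(iii) forces the denominator of $\mu(P)$ to divide $v(\Delta)\le 1$, while part~(i) gives $\mu(P)\le 1/4$, so $\mu(P)=0$ and returning~$0$ is correct. For $B=v(\Delta)\ge 2$ I would establish the loop invariant that at the start of iteration~$n$ the pair $(x_1,x_2)$ is a primitive pair of Kummer coordinates for~$2^n P$ and $\mu_0=\sum_{j=0}^{n-1}4^{-j-1}\eps(2^j P)$. This is maintained because $\delta$ sends Kummer coordinates of~$Q$ to Kummer coordinates of~$2Q$, and for primitive input the quantity $\ell=\min\{v(x'_1),v(x'_2)\}$ is by definition $\eps(2^n P)$; scaling by~$\pi^{-\ell}$ restores primitivity. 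To justify the early return when $\ell=0$, I would use the identity $\eps(Q)=4\mu(Q)-\mu(2Q)$ from the proof of Lemma~\ref{L:mu_eps_bounds}: if $\eps(2^n P)=0$ then $\mu(2^{n+1}P)=4\mu(2^n P)$, and iterating, together with the uniform bound $\mu\le v(\Delta)/4$ from Lemma~\ref{L:mu_eps_bounds}(i), forces $\mu(2^n P)=0$; since the $\eps$-values are non-negative, $\eps(2^m P)=0$ for every $m\ge n$, so the partial sum $\mu_0$ already equals~$\mu(P)$. If instead the loop runs to completion, the concluding step invokes Lemma~\ref{L:fast algo 2} with $M=B=v(\Delta)$ and with $B$ as the bound on~$\eps$ (both valid by Lemma~\ref{L:mu_eps_bounds}(ii),(iii)) to recover $\mu(P)$ from~$\mu_0$.

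For the running time, I would carry out all arithmetic in~$\O/\pi^N$ with $N=(m+1)B+1\ll B\log B$ $v$-adic digits, corresponding to a bit-size $\ll B(\log B)(\log\#k)$. The precision bookkeeping is the only delicate part: at the start of iteration~$n$ the working precision is $N-\sum_{j<n}\eps(2^j P)\ge N-nB\ge B+1$, which strictly exceeds the bound $B\ge\eps(2^n P)$, so $\ell$ is read off unambiguously. Each iteration evaluates the two homogeneous quartics $\delta_1,\delta_2$ by a constant number of ring operations in~$\O/\pi^N$, costing $\ll\Mult\bigl((\log v(\Delta))\,v(\Delta)(\log\#k)\bigr)$, while the division by~$\pi^\ell$ is a cheap shift. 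There are $m+1\ll\log v(\Delta)$ iterations, and the closing continued-fraction search for the unique rational in $[\mu_0,\mu_0+1/B^2]$ with denominator $\le B$ is negligible. Multiplying iteration count by per-iteration cost yields the stated bound.

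The two steps I expect to require real care are the early-termination argument (which crucially uses the boundedness of~$\mu$, not just of~$\eps$) and the precision accounting in the $p$-adic arithmetic; once those are in place, everything else is routine bookkeeping on top of Lemmas~\ref{L:fast algo 2} and~\ref{L:mu_eps_bounds}.
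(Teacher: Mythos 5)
Most of your proposal matches the paper's proof and is sound: the loop invariant (primitive Kummer coordinates for $2^nP$, with $\ell=\eps(2^nP)$ for primitive input), the appeal to Lemma~\ref{L:fast algo 2} with $M=B$ justified by Lemma~\ref{L:mu_eps_bounds}(ii),(iii), the precision accounting (loss of at most $B$ digits per duplication, so at least $B+1>\eps(2^nP)$ digits remain and $\ell$ is read off correctly), and the cost estimate of $m+1\ll\log v(\Delta)$ passes at $\ll\Mult\bigl(((m+1)B+1)\log\#k\bigr)$ each. Your treatment of the case $v(\Delta)\le 1$ via parts (i) and (iii) of Lemma~\ref{L:mu_eps_bounds} is a harmless variant of the paper's appeal to Table~\ref{T:mu_values}.

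The genuine gap is your justification of the early return in step~4c of Algorithm~\ref{algo1}. What is needed there is: if $\eps(2^nP)=0$ then $\eps(2^{n'}P)=0$ for all $n'\ge n$, equivalently $\mu(2^nP)=0$. Your argument takes $\eps(2^nP)=0$, deduces $\mu(2^{n+1}P)=4\mu(2^nP)$, and then says ``iterating \dots forces $\mu(2^nP)=0$''; but iterating the relation requires $\eps(2^{n+1}P)=0$, $\eps(2^{n+2}P)=0$, \dots, which is exactly the statement to be proved. A single application of $\eps(Q)=4\mu(Q)-\mu(2Q)$ together with $\mu\le\frac14 v(\Delta)$ only yields $\mu(2^nP)\le\frac1{16}v(\Delta)$, not $0$, so as written the step is circular. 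The fact itself is true, but it needs a different input: for a minimal model, $\eps(Q)=0$ means $Q$ reduces to a nonsingular point, and such points form a subgroup, hence the property is preserved under duplication; for a non-minimal integral model one can compare $\mu$ with its value on a minimal model (as in the proof of Proposition~8 of~\cite{CPS}) and check that $\eps(Q)=0$ forces $Q$ to lie sufficiently deep in the formal group, a condition again stable under duplication and on which $\eps$ vanishes. (The paper itself only asserts this implication, and remarks that step~4c could be removed without affecting correctness or worst-case complexity; but to prove the algorithm \emph{as stated} correct, some such argument is required, and the one you give does not close.)
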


\begin{proof}
  If $B = v(\Delta) \le 1$, then $\mu = \eps = 0$ by Table~\ref{T:mu_values}.
  Otherwise the loop in step~4 computes the sum in Lemma~\ref{L:fast algo 2}
  (where now $M = B \ge 2$). When $\ell = 0$ in step~4c, then $\eps(2^{n'}P) = 0$
  for all $n' \ge n$, hence the infinite sum defining~$\mu$ is actually
  a finite sum and equals~$\mu_0$. (This step could be left out
  without affecting the correctness or the worst-case complexity of the algorithm.)
  Lemma~\ref{L:mu_eps_bounds} shows that $B$ is an upper bound for~$\eps$
  and that $M=B$ is an upper bound for the denominator of~$\mu$. So the algorithm computes~$\mu(P)$,
  provided the precision of $(m+1)B + 1$ $v$-adic digits is sufficient.
  For this, note that the precision loss at each duplication step is
  given by~$\eps(2^n P)$ and is therefore bounded by~$B$. So after at most
  $m+1$~steps in the loop, the resulting~$(x_1,x_2)$ still has at least one digit
  of precision.

  It remains to estimate the running time. We assume that elements of~$\O$
  are represented as truncated power series in~$\pi$, whose coefficients
  are taken from a complete set of representatives for the residue classes.
  Operations on these coefficients can be performed in time $\ll \Mult(\log \#k)$.
  Then steps b through~e in the loop take negligible time compared to step~a,
  which involves a fixed number of additions and multiplications of elements
  given to a precision of $(m+1)B + 1$ digits, leading to
  a complexity of
  \[ \ll \Mult\bigl(((m+1)B + 1) (\log \#k)\bigr) \]
  operations for each pass through the loop. The total running time is therefore
  \[
    \ll (m+1) \Mult\bigl(((m+1)B + 1) (\log \#k)\bigr)
    \ll (\log v(\Delta)) \Mult\bigl((\log v(\Delta)) v(\Delta) (\log \#k)\bigr)
  \]
  as $v(\Delta) \to \infty$.
\end{proof}

\begin{rk}\label{R:silverman_algo}
  We stress that our algorithm does not require $W$ to be minimal.
  If we know that $W$ is minimal, then Silverman's algorithm~\cite{SilvermanHeights}*{\S5},
  which only involves the computation of the valuations of a bounded number of
  polynomials in the coefficients of~$W$ and the coordinates of~$P$,
  can be used to compute~$\mu(P)$.
\end{rk}


\section{Archimedean local heights}\label{arch}

Let $K$ be an archimedean local field with valuation~$v$.
The following methods have been proposed for the computation of the local
height $\hat{\lambda} = \hat{\lambda}_v$ on an elliptic curve~$E/K$, given by a Weierstrass
equation~\eqref{W_eqn}:

\begin{enumerate}[$\bullet$]
  \item An elegant series approach due to Tate and modified by
    Silverman~\cite{SilvermanHeights}.
  \item A more complicated series approach based on theta functions, see~\cite{Cohen}*{Algorithm~7.5.7};
  \item An algorithm based on the Arithmetic Geometric Mean (AGM) and~2-isogenies
    introduced by Bost and Mestre in an unpublished manuscript~\cite{BMisog}, which
    currently requires $v$ to be real; see also Bradshaw's PhD thesis~\cite{BradshawThesis}.
\end{enumerate}

Tate's series converges linearly.
The theta series has a better rate of convergence and is also faster in practice if the elliptic integrals arising in the
algorithm are computed using the AGM.
If $v$ is real and one is interested in high precision, then the method of Bost and Mestre
is preferable, as it converges quadratically.
We now describe this algorithm and provide a complexity analysis.
Let $v$ be real and let $|{\cdot}|$ denote the usual absolute value on $K= \R$.
We want to compute $\hat{\lambda}(P)$ for a point $P \in E(\R)$; for simplicity,
we only consider the case $2P \ne O$.
Note that the function~$\mu$ considered in~\cite{BMisog} satisfies $\mu = \frac{1}{2}\hat{\lambda}$.

Applying a transformation, we may assume that $E$ is given by a Weierstrass equation
\[
  W \colon y^2 = x(x^2+ux+v)\,,
\]
where $u,v \in \R$. If all points of order~$2$ on~$E$ are real, then we set $E_0 = E$.
Otherwise, consider the isogeny $E \to E_0$ defined by
\begin{equation}\label{isog0}
  (x,y) \mapsto\left(\frac{x^2+ux+v}{x},\,y\frac{x^2-v}{x^2}\right)\,,
\end{equation}
where now $E_0$ has full~2-torsion over $\R$ and is given by the Weierstrass equation
\[
 y^2 = x(x^2-2ux+u^2-4v)\,.
\]
By Proposition~\ref{P:bernardi}, it suffices to compute the local height of the image
of~$P$ on~$E_0$ to obtain~$\hat{\lambda}(P)$.
For the algorithm, we need a Weierstrass equation
\[
 W_0 \colon y^2=x(x+a_0^2)(x+b_0^2)
\]
for~$E_0$, where $0 < b_0<a_0\in\R$.
We may assume that $P$ lies on the connected component $E^0_0(\R)$ of the identity;
if not, we can apply the algorithm to $2P \in E^0_0(\R)$ and compute $\hat{\lambda}(P)$ using
\begin{equation}\label{conn-cpt}
  \hat{\lambda}(2P) = 4\hat{\lambda}(P) - \log|2y(P)|\,.
\end{equation}
We define the AGM sequences $(a_n)$ and $(b_n)$ by
\[
 a_n=\frac{a_{n-1}+b_{n-1}}{2}\,, \qquad b_n=\sqrt{a_{n-1}b_{n-1}}\, ,
\]
and we let $M(a_0,b_0)$ denote their common limit.
For $n\ge 1$ we recursively define an elliptic curve~$E_n$ over the
reals by the Weierstrass equation
\[
 W_n \colon y^2=x(x+a_n^2)(x+b_n^2)\, ,
\]
and we define a 2-isogeny $\varphi_{n-1} \colon E_n\to E_{n-1}$ by
\[
 (x,y)\longmapsto\left(\frac{x(x+b_n^2)}{x+a_n^2},\; y\frac{(x+a_{n-1}a_n)(x+b_{n-1}a_n)}{(x+a_n^2)^2}\right)\, .
\]
Then the sequence of curves $(E_n)_n$ converges to a singular cubic curve~$E_\infty$,
with equation
\[
 W_\infty \colon y^2=x\left(x+M(a_0,b_0)^2\right)^2\,.
\]
Moreover, the sequence of isogenies $(\varphi_n)_n$ converges to the identity map
on~$E_\infty(\R)$.

Now let $\hat{\lambda}_n$ denote the local height on~$E_n(\R)$.
Then Proposition~\ref{P:bernardi} asserts that
\begin{equation}\label{isogrel}
  \hat{\lambda}_{n-1}(\varphi_{n-1}(P_n))=2\hat{\lambda}_n(P_n)-\log(x(P_n)+a_n^2)\, ,
\end{equation}
whenever we have $x(\varphi_{n-1}(P_n))\ne0$.

Bost and Mestre use~\eqref{isogrel} to give a formula for~$\hat{\lambda}(P)$.
Note that $\varphi_{n-1}$ maps $E_{n}(\R)$ onto the connected component~$E^0_{n-1}(\R)$ and
that points on~$E^0_{n-1}(\R)$ always have a unique preimage on~$E^0_n(\R)$ under~$\varphi_{n-1}$.
Setting $P_0=P$, we therefore get a well-defined sequence of preimages
$P_n=(x_n,y_n)\in E^0_n(\R)$, which
converges to a point $P_\infty=(x_\infty,y_\infty)\in E_\infty(\R)$.
Here $x_n$ can be calculated using
\[
  x_n=\frac{1}{2}\left(x_{n-1}-a_{n-1}b_{n-1}+\sqrt{(x_{n-1}+a_{n-1}^2)(x_{n-1}+b_{n-1}^2)}\right).
\]
From~\eqref{isogrel} we deduce
\[
  \hat{\lambda}(P) = \hat{\lambda}_0(P)
     = \log\lim_{n\to\infty}\frac{(x_n+a_n^2)^{2^{n-1}}}{\prod^{n-1}_{m=1}(x_m+a_m^2)^{2^{m-1}}}\,,
\]
or equivalently,
\begin{equation}\label{bm_sum}
  \hat{\lambda}(P) = \log(x_1+a_1^2) +
                      \sum^\infty_{n=1}2^{n}\log\frac{x_{n+1} + a_{n+1}^2}{x_{n} + a_{n}^2}\, .
\end{equation}
Because of the quadratic convergence of the~AGM, these formulas can be used to compute
$\hat{\lambda}(P)$ to an accuracy of~$2^{-d}$ in $\ll \log (d + \log \|W\|)$ steps.
This was already shown by Bradshaw, see~\cite{BradshawThesis}*{\S6.1}.
We give a slightly different error estimate.
Note first that we have
\[
    a_n-b_n \le 2^{1-2^n}(a_0-b_0)\, .
  \]
Because $x_n>0$ and $0<b_0<b_n<a_n$, this implies
\begin{equation}\label{aux_bd}
  \frac{a_n^2-b_n^2}{x_n+a_n^2} \le 2^{1-2^n}(a_0-b_0)\frac{a_n+b_n}{x_n+a_n^2} \le
  2^{2-2^n}\left(\frac{a_0}{b_0}-1\right)\, .
\end{equation}
Now set
\[
  s_n := 1 - \frac{x_{n+1}+a_{n+1}^2}{x_n+a_n^2}
\quad\textrm{and}\quad\vartheta := \frac{a_0}{b_0}-1 + \sqrt{\frac{a_0}{b_0}-1}\, .
\]
Then we have $0<s_n<1$ and $\vartheta \ll \|W\|$. The sequence $s_n$ converges rapidly to~0 for
large $n$, since~\eqref{aux_bd} implies
\begin{align}
  s_n
& \, = \left|\frac{1}{2}\left(\sqrt{\frac{x_n+b_n^2}{x_n+a_n^2}} +
\frac{x_n+\left(\frac{a_n^2+b_n^2}{2}\right)}{x_n+a_n^2}\right)-1\right|\nonumber\\
& \,\le \frac{1}{2}\left|\sqrt{\frac{x_n+b_n^2}{x_n+a_n^2}} -1\right| + \frac{1}{2}\left|\frac{x_n+\left(\frac{a_n^2+b_n^2}{2}\right)}{x_n+a_n^2}-1\right|\nonumber\\
& \,\le \frac{1}{2}\sqrt{\frac{a_n^2-b_n^2}{x_n+a_n^2}}  + \frac{1}{4}\left(\frac{a_n^2-b_n^2}{x_n+a_n^2}\right)\nonumber\\
& \,\le 2^{-2^{n-1}}\sqrt{\frac{a_0}{b_0}-1}+   2^{-2^n}\left(\frac{a_0}{b_0}-1\right) \nonumber\\
& \,\le 2^{-2^{n-1}}\vartheta\label{s_n-bound}\, .
\end{align}
In particular, we have $s_n \le \frac{1}{2}$ for $n \ge \log_2(\log_2\vartheta + 1) + 1$, so that
$|\log(1-s_n)| \le 2s_n$ for such $n$.
We can use this to bound the tail of the series in~\eqref{bm_sum}. Namely, we have
\begin{align*}
  \left|\sum^\infty_{n=N+1}2^{n}\log\frac{x_{n+1} + a_{n+1}^2}{x_{n} + a_{n}^2} \right|
& \,\le \sum^\infty_{n=N+1}2^{n}\left|\log(1-s_n) \right|\\
& \,\le \vartheta\sum^\infty_{n=N+1}2^{1+n-2^{n-1}}\, ,
\end{align*}
if $N \ge \log_2(\log_2\vartheta + 1)$.
For $n \ge 4$, we have $n-2^{n-1} \le -2^{n-2}$, so
\begin{equation}\label{est_n4}
  \left|\sum^\infty_{n=N+1}2^{n}\log\frac{x_{n+1} + a_{n+1}^2}{x_{n} + a_{n}^2} \right|
  \le \vartheta\sum^\infty_{n=N+1}2^{1-2^{n-2}}
  \le  2^{2-2^{N-1}}\vartheta
\end{equation}
follows, provided $N \ge \max\{3, \log_2(\log_2\vartheta + 1) \}$.

Having computed~$\hat{\lambda}(P)$ for $P \in E(\R)$, we get~$\Psi_\infty(P)$ from
\begin{equation}\label{psi-lambda}
  \Psi_\infty(P) = \log \max\{1,|x(P)|\}-\hat{\lambda}(P)\, .
\end{equation}
\begin{prop}\label{P:arch}
  The algorithm above computes $\Psi_\infty(P)$ to $d$~bits of precision in time
\[
  \ll \log(d + \log\|W\|)^2 \Mult(d + \log\|W\|)\, .
\]
\end{prop}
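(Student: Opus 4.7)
My plan consists of three steps: (a)~choose the number $N$ of AGM iterations so that truncating the series in~\eqref{bm_sum} at index~$N$ introduces an error below $2^{-d-1}$; (b)~fix a uniform working precision~$p$ at which all arithmetic is performed, and bound the cost of a single iteration; (c)~multiply~(a) by~(b) to obtain the total running time. For step~(a), the tail estimate~\eqref{est_n4} guarantees an error of at most $2^{2-2^{N-1}}\vartheta$ whenever $N \ge \max\{3,\log_2(\log_2\vartheta + 1)\}$, so it suffices to take $N \ge \log_2(d + \log_2\vartheta + 3) + 1$. Since $\vartheta \ll \|W\|$, this yields $N \ll \log(d + \log\|W\|)$.

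For steps~(b) and~(c), I would work throughout with precision $p = d + O(\log\|W\|)$ bits. Each pass through the recursion performs a constant number of additions, multiplications and divisions, together with one square root to pass from $(a_n,b_n,x_n)$ to $(a_{n+1},b_{n+1},x_{n+1})$ and one logarithm to form the summand in~\eqref{bm_sum}. Using Newton iteration for the square root, which costs $\ll \Mult(p)$, and an AGM-based algorithm for the logarithm, which costs $\ll \log(p)\Mult(p)$, one iteration runs in time $\ll \log(p)\Mult(p)$. The preliminary isogeny~\eqref{isog0}, the connected-component correction~\eqref{conn-cpt}, and the conversion to $\Psi_\infty(P)$ via~\eqref{psi-lambda} are all one-off costs of the same order or smaller. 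Multiplying by~$N$ iterations yields
\[
  \ll N \log(p)\Mult(p) \ll \log(d+\log\|W\|)^2 \Mult(d+\log\|W\|),
\]
which is the claimed bound.

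The main obstacle is the round-off analysis that justifies step~(b), namely the claim that $p = d + O(\log\|W\|)$ bits of working precision actually suffice after $N$ iterations. The key inputs are: the quadratic convergence of the AGM, which forces $a_n - b_n \ll 2^{-2^{n-1}}(a_0-b_0)$ and hence makes the summands shrink doubly exponentially; the positivity $x_n > 0$, which keeps the denominators $x_n + a_n^2$ bounded away from zero so that no catastrophic cancellation occurs in the ratios $(x_{n+1}+a_{n+1}^2)/(x_n+a_n^2)$; and the fact that each $2^n|\log(1-s_n)|$ is already bounded, as shown by~\eqref{s_n-bound}, so that the accumulated round-off over all $N$ terms is controlled by $O(N 2^{-p})$. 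Combining these into a careful forward-error analysis of the recursion is the technical heart of the argument; once it is in place, the complexity bound follows by the counting above.
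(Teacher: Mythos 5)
Your overall route is the same as the paper's: truncate the series \eqref{bm_sum} at $N \ll \log(d+\log\|W\|)$ iterations using the tail bound \eqref{est_n4}, bound each pass by a fixed number of ring operations and square roots (cost $\ll \Mult(p)$) plus one AGM-based logarithm (cost $\ll \log(p)\Mult(p)$), treat the setup (roots of the cubics, the isogeny \eqref{isog0}, the corrections from \eqref{conn-cpt} and Proposition~\ref{P:bernardi}, and the conversion \eqref{psi-lambda}) as one-off costs of the same or smaller order, and multiply by $N$. The skeleton and the final count agree with the paper.

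The one point that does not hold as you state it is precisely the step you defer: the claim that a uniform working precision $p = d + O(\log\|W\|)$ suffices, with accumulated round-off $O(N2^{-p})$. The $n$-th summand of \eqref{bm_sum} is $2^{n}\log\bigl((x_{n+1}+a_{n+1}^2)/(x_n+a_n^2)\bigr)$, so an absolute error of $2^{-p}$ in the logarithm is amplified to $2^{n-p}$; summing over $n \le N$ gives a total round-off of order $2^{N-p}$, which by the choice of $N$ is of size about $(d+\log\|W\|)\,2^{-p}$, not $N2^{-p}$. When $d$ dominates $\log\|W\|$ this exceeds the target accuracy $2^{-d}$ for your choice of~$p$. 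The repair is the one the paper makes: compute $\log(1-s_n)$ to $d+n$ bits, i.e.\ carry an extra $N \ll \log(d+\log\|W\|)$ guard bits through the loop, and start the computation of $a_0$, $b_0$, $x_0$ with an extra $\ll \log\|W\| + \log(d+\log\|W\|)$ bits to absorb the loss in the root-finding and the change of model. Since these guard bits change the working precision only by additive terms of size $O(\log(d+\log\|W\|))$ and $O(\log\|W\|)$, the per-iteration cost is still $\ll \log(d+\log\|W\|)\Mult(d+\log\|W\|)$ and your final bound survives unchanged; but as written, the precision claim underlying your step~(b) is too weak, and the forward-error analysis you leave open is exactly where it has to be corrected.
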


\begin{proof}
  Suppose first that we have already computed $a_0, b_0$ and $x_0$ and that $P$ lies on
  the connected component $E_0^0(\R)$.
  By~\eqref{bm_sum} and~\eqref{est_n4}, we have
  \[
  \left|\hat{\lambda}(P) - \log(x_1+a_1^2) -
                      \sum^N_{n=1}2^{n}\log\frac{x_{n+1} + a_{n+1}^2}{x_{n} +
                      a_{n}^2}\right| \le 2^{-d}\,,
  \]
  for
  \[
    N = \max\left\{3,\, \log_2\left(d+2+\log_2 \vartheta)\right)+1\right\} \ll \log (d + \log \|W\|)\, .
  \]
  For every $n \le N$, we have to apply a fixed number of additions,
  multiplications and square roots to compute $a_{n+1}, b_{n+1}$ and $x_{n+1}$ --- which can
  be done to $d'$~bits of precision in time $\ll \Mult(d')$ --- and we have to compute
  $\log(1-s_n)$.
  Because of precision loss due to the multiplication by $2^n$, we need  to compute
  $\log(1-s_n)$ to  an additional $n$~bits, so we need an initial precision of
  \[
    d  + N \ll d  + \log (d + \log \|W\|)
  \]
  bits. A logarithm can be computed to $d'$~bits of precision in
  time $\ll (\log d')\Mult(d')$ using one of several quadratically converging algorithms
  based on the AGM, see~\cite{Borwein}*{Chapter~7}.
  Therefore, and by~\eqref{s_n-bound}, we can compute $\log(1-s_n)$ to $d+n$~bits of precision
  in time
  \[
    \ll \log(d + \log(d + \log \|W\|))\Mult(d + \log (d + \log \|W\|))\, .
  \]
  The computation of $\log(x_1+a_1^2)$ to $d$~bits of precision takes time
  \[
     \ll \log(d + \log\|W\|)\Mult(d + \log\|W\|) \, .
  \]
  Hence, given $a_0$, $b_0$ and $x_0$ to $d + N$~bits of precision, we can compute
  $\hat{\lambda}(P)$ to $d$~bits of precision in time
  \begin{align*}
    &\ll \log(d + \log\|W\|)\, \times\\ &\qquad{} \left(\Mult(d + \log\|W\|) + \log\bigl(d + \log(d + \log\|W\|)\bigr)\Mult\bigl(d + \log(d +
    \log\|W\|)\bigr)\right)\, .
  \end{align*}
  We can then find $\Psi_\infty(P)$ using~\eqref{psi-lambda} in time $\ll \log(d)
  \Mult(d)$, which is negligible.

  To compute $a_0$, $b_0$ and $x_0$ from a given Weierstrass equation, we need to
  find the roots of at most two polynomials of degree $\le 3$ with real coefficients,
  transform the corresponding Weierstrass equation and find the image of our point $P$
  under these transformations.
  The roots of a polynomial of fixed degree to $d'$~bits of precision can be found in time
  $\ll \Mult(d')$, see~\cite{Borwein}*{Theorem~6.4}; the same holds for the evaluation of
  a polynomial of fixed degree.
  To counter loss of precision, we start with an initial precision of $\ll d + \log
  \|W\| + \log(d + \log\|W\|)$ bits, so we can compute $a_0$, $b_0$ and $x_0$ to $d + N$~bits of precision in time
  \[
    \ll \Mult(d + \log \|W\| + \log(d + \log\|W\|))\, ,
  \]
  which is dominated by the complexity of the remaining parts of the algorithm.
  The logarithmic correction terms coming from~\eqref{conn-cpt} and from Proposition~\ref{P:bernardi} applied
  to the isogeny~\eqref{isog0} and to the change of model needed to find $W_0$ can be
  computed to sufficiently many bits of precision in time $\ll \log(d + \log \|W\|)\Mult(d + \log
  \|W\|)$.
  Hence the result follows.
\end{proof}

\begin{rk}\label{R:series}
  For large $n$, computing $\log(1-s_n)$ using an AGM-based
  algorithm might be less efficient than using a power series such as
  \[
    \log x = 2\sum^{\infty}_{k=0}\frac{1}{2k+1}\left(\frac{x-1}{x+1}\right)^{2k+1}\, .
  \]
  The reason is that by~\eqref{s_n-bound}, the numbers $1-s_n$
  are very close to~1, so only few terms of the power series have to be computed.
\end{rk}


\section{Computing the canonical height of rational points}\label{algo}

We combine the results of Sections~\ref{nonarch} and~\ref{arch} into an efficient
algorithm for computing the canonical height of a point $P$ on an elliptic curve $E$
over a number field, proving Theorem~\ref{T:main}. For simplicity, we take this number
field to be~$\Q$ in the following.
We assume that our curve is given by a Weierstrass equation~\eqref{W_eqn} $W$ with
coefficients in~$\Z$, but we make no minimality assumption.

One usually computes~$\hat{h}(P)$ using the decomposition~\eqref{loc_height_decomp} into
local heights~$\hat{\lambda}_v(P)$. The local height~$\hat{\lambda}_\infty(P)$ can be computed using
the algorithm of Bost-Mestre discussed in Section~\ref{arch} or one of the other
approaches mentioned there.
If the factorization of~$\Delta(W)$ is known, we can use~\cite{SilvermanHeights}*{\S5} to
compute the local heights~$\hat{\lambda}_p(P)$ efficiently.
Alternative, but less efficient algorithms can be found in~\cite{TschoepeZimmer}
and~\cite{Zimmer}.
If we know that $W$ is minimal (for which some factorization is required, see the
introduction), then we can use~\cite{SilvermanLittleFact} to compute
$\sum_p \hat{\lambda}_p(P)$ without factoring~$\Delta(W)$.
Another approach to computing~$\hat{h}(P)$ without factorization is discussed
in~\cite{EverestWard}, but their method does not yield a polynomial-time algorithm.

Our goal is to devise an algorithm for the computation of~$\hat{h}(P)$
that runs in time quasi-linear in $\log \|W\|$, $h(P)$ and the required
precision~$d$, measured in bits after the binary point. We note that $h(P)$ is the logarithm of a
rational number, so it can be computed in time
$\ll \log(h(P) + d) \Mult(h(P) + d)$.
In the previous section, we showed that there is a quasi-linear algorithm for the
computation of $\Psi_\infty(P)$, see Proposition~\ref{P:arch}.

It remains to see how the total contribution
\[{\Psi}^{\text{f}}(P) \colonequals \sum_p \Psi_p(P) = \sum_p \mu_p(P) \log p\]
coming from the local error functions at finite places can be computed efficiently;
here we write~$\mu_p$ for the local height correction function
over~$\Q_p$ as in Definition~\ref{mu_def}.

Fix $P \in E(\Q)$.
We assume that $(x_1,x_2)\in \Z^2$ is a primitive (i.e., $\gcd(x_1,x_2) = 1$)
pair of Kummer coordinates for~$P$.
We set $g_n = \gcd(\delta(x^{(n)}_1,x^{(n)}_2))$ where
$(x^{(n)}_1,x^{(n)}_2) \in \Z^2$ is a primitive pair of Kummer coordinates for~$2^n P$.
Then the definition of $\mu_p$ implies that
\[ {\Psi}^{\text{f}}(P) = \sum_{n=0}^\infty 4^{-n-1} \log g_n \,. \]
See~\cite{FlynnSmart} for a related approach in genus~2.
By Lemma~\ref{L:mu_eps_bounds} we know that each~$g_n$ divides~$\Delta(W)$.
The key observation is that ${\Psi}^{\text{f}}(P)$ is a rational linear combination
of logarithms of positive integers, which can be computed exactly as follows.

\begin{algo}\label{algo2}\strut
\begin{enumerate}[1.]\addtolength{\itemsep}{2mm}
  \item Set $(x_1',x_2') \colonequals \delta(x_1,x_2)$, $g_0 \colonequals \gcd(x'_1,x'_2)$
    and $(x_1,x_2) \colonequals (x'_1/g_0,x'_2/g_0)$.
  \item Set $D \colonequals \gcd(\Delta(W), g_0^\infty)$
        and $B \colonequals \lfloor \log D/\log 2 \rfloor$.
  \item If $B \le 1$, then return $0$.
        Otherwise set $m \colonequals \lfloor \log(B^5/3)/\log 4 \rfloor$.
  \item For $n \colonequals 1$ to $m$ do:
        \begin{enumerate}[a.]\addtolength{\itemsep}{1mm}
          \item Compute $(x'_1,x'_2) \colonequals \delta(x_1,x_2) \bmod D^{m+1} g_0$.
          \item Set $g_n \colonequals \gcd(D,\gcd(x'_1,x'_2))$ and $(x_1,x_2) \colonequals
            (x'_1/g_n,x'_2/g_n)$.
        \end{enumerate}
  \item Use Bernstein's algorithm from~\cite{dcba2} to compute
        a sequence $(q_1, \ldots, q_r)$ of pairwise coprime positive integers
        such that each~$g_n$ (for $n = 0, \ldots, m$) is a product of powers
        of the~$q_i$: $g_n = \prod_{i=1}^r q_i^{e_{i,n}}$.
  \item For $i \colonequals 1$ to $r$ do:
        \begin{enumerate}[a.]\addtolength{\itemsep}{1mm}
          \item Compute $a \colonequals \sum_{n=0}^{m} 4^{-n-1} e_{i,n}$.
          \item Let $\mu_i$ be the simplest fraction between $a$ and $a+1/B^4$.
        \end{enumerate}
  \item Return $\sum_{i=1}^r \mu_i \log q_i$, a formal linear combination of logarithms.
\end{enumerate}
\end{algo}

\begin{prop} \label{P:nofact}
  The preceding algorithm computes ${\Psi}^{\text{\rm f}}(P)$ in time
  \[ \ll (\log\log D) \Mult((\log\log D) (\log D)) + \Mult(h(P)) \,. \]
\end{prop}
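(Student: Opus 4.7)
The proof has two ingredients: correctness of the algorithm and the running-time bound.

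My plan is to begin with the correctness argument, whose main content is the support property
\[
  \mu_p(P) > 0 \implies p \mid g_0\,,
\]
which will force all primes contributing to $\Psi^{\text{f}}(P)$ to divide $D = \gcd(\Delta(W), g_0^\infty)$. For primes~$p$ at which $W$ is minimal, this follows from a direct inspection of Table~\ref{T:mu_values} together with the identity $\eps_p(Q) = 4\mu_p(Q) - \mu_p(2Q)$: across all Kodaira types, $\mu_p$ and $\eps_p$ have the same zero set on the component group. For primes at which $W$ is non-minimal, with change of variables $u = \pi^s$ ($s \ge 1$) mapping $W$ to a minimal model, tracking how $\delta_1, \delta_2$ scale under $(x_1, x_2) \mapsto (\pi^{2s} x_1', x_2')$ and splitting on the value of $v_p(x_P^{\min})$ shows that $\eps_p^W(P)$ vanishes exactly when $\mu_p^W(P)$ does. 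So every prime in the support of $\Psi^{\text{f}}(P)$ divides~$D$.

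The remaining correctness is bookkeeping. Since each $g_n \mid D$, the modulus $D^{m+1}g_0$ in step~4 leaves $p$-adic precision at least $D g_0 \ge D$ throughout the loop (each pass divides by $g_n \mid D$), which is enough to extract the $D$-part of $\gcd(x_1', x_2')$. Bernstein's coprime-base algorithm applied to $g_0, \ldots, g_m$ produces pairwise coprime $q_i$ with the property that for every $p \mid q_i$ the exponent $v_p(g_n)/v_p(q_i) = e_{i,n}$ is independent of~$p$, so $\mu_p(P)/v_p(q_i)$ is a common value~$\mu_i$ and
\[
  \Psi^{\text{f}}(P) = \sum_p \mu_p(P) \log p = \sum_i \mu_i \log q_i\,.
\]
To extract each $\mu_i$ exactly, observe that it has denominator at most $B^2$ (since $\mu_p$ has denominator at most $v_p(\Delta) \le B$ by Lemma~\ref{L:mu_eps_bounds}(iii), and $v_p(q_i) \le B$), while the truncation error is bounded by $\sum_{n > m} 4^{-n-1} B \le 1/B^4$ by the choice of~$m$. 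Two distinct fractions with denominator $\le B^2$ differ by more than $1/B^4$, so the simplest-fraction step in 6b returns $\mu_i$ uniquely, exactly as in Lemma~\ref{L:fast algo 2}.

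For the running time, step~1 operates on integers of size $\ll h(P) + \log\|W\|$ and costs $\ll \Mult(h(P)) + \Mult(\log\|W\|)$; step~2 takes $\tilde{O}(\log\|W\|)$. The loop of step~4 runs $m \ll \log\log D$ times, and each pass performs $O(1)$ arithmetic operations modulo an integer of size $\ll (\log\log D)(\log D)$, contributing $\ll \Mult((\log\log D)(\log D))$ per pass. Bernstein's algorithm on inputs of total bit-length $\ll (\log\log D)(\log D)$ runs in time $\tilde{O}((\log\log D)(\log D))$, dominated by the loop. Steps~6 and~7 are elementary. Summing yields the claimed bound. The main obstacle is establishing the support property in the non-minimal case; everything else is careful bookkeeping within the framework of Section~\ref{nonarch}.
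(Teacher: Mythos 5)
Your proposal is correct and follows essentially the same route as the paper's proof: support of $\Psi^{\text{f}}(P)$ confined to primes dividing $g_0$ (hence $D$), sufficient $p$-adic precision modulo $D^{m+1}g_0$, identification of each $\mu_i=\mu_p(P)/v_p(q_i)$ via the denominator bound $B^2$ against the truncation error $1/B^4$, and the same cost accounting dominated by the loop in step~4. The only differences are cosmetic: you expand the support property (which the paper dispatches in one line via $\eps_p(P)=0\Rightarrow\mu_p(P)=0$) and omit the paper's brief check that returning $0$ when $B\le 1$ is justified.
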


\begin{proof}
  We note that if $B \le 1$ in step~3, then either $g_0 = 1$ and $\Psi^{\text{\rm f}}(P) = 0$,
  or else $D \in \{2,3\}$. In the latter case, $g_0$ is a power of $p = 2$ or~$3$
  and $v_p(\Delta(W)) = 1$, which would imply that $\eps_p(P) = 0$, so $g_0 = 1$,
  and we get a contradiction.

  Let $p$ be a prime. If $p \nmid g_0$, then $\eps_p(P) = 0$ and therefore $\mu_p(P) = 0$.
  So we now assume that $p$ divides~$g_0$. We have $v_p(\Delta(W)) = v_p(D) \le B$.
  We see that $p^{(m+1)v_p(D) + 1}$ divides $D^{m+1} g_0$, so computing
  modulo~$D^{m+1} g_0$ will provide sufficient $p$-adic accuracy so that
  $v_p(g_n) = \eps_p(2^n P)$ for all $n \le m$, compare the proof of
  Proposition~\ref{P:fast algo} above.
  (One could replace $D^{m+1} g_0$ by~$D^{m+1-n} g_0$ in step 4a.)
  Since all the~$g_n$ are power products of the~$q_i$, there will be exactly
  one~$i = i(p)$ such that $p \mid q_{i(p)}$; let $b_p = v_p(q_{i(p)})$.
  Then
  \[ \sum_{n=0}^m 4^{-n-1} \eps_p(2^n P)
       = \sum_{n=0}^m 4^{-n-1} v_p(g_n)
       = b_p \sum_{n=0}^m 4^{-n-1} e_{i(p),n}
       = b_p a \,,
  \]
  so
  \[ \mu_p(P) = \sum_{n=0}^\infty 4^{-n-1} \eps_p(2^n P)
              = b_p a + \sum_{n=m+1}^\infty 4^{-n-1} \eps_p(2^n P) \,,
  \]
  where the last sum is in $[0, 1/B^4]$ (this follows from $0 \le \eps_p \le B$,
  see Lemma~\ref{L:mu_eps_bounds},
  and the definition of~$m$, compare the proof of Lemma~\ref{L:fast algo 2}).
  We know that the denominator
  of~$\mu_p(P)$ is at most~$B$ (see Lemma~\ref{L:mu_eps_bounds}), so the
  denominator of~$\mu_p(P)/b_p$ is at most~$B^2$, since $b_p\le v_p(D) \le B$.
  On the other hand, $a \le \mu_p(P)/b_p \le a + 1/(b_p B^4) \le a + 1/B^4$,
  which implies that $\mu_p(P)/b_p$ is the unique fraction in $[a, a + 1/B^4]$
  with denominator at most~$B^2$, so $\mu_p(P)/b_p = \mu_{i(p)}$ by Step~6b.
  Now
  \[ \sum_p \mu_p(P) \log p
       = \sum_p \mu_{i(p)} b_p \log p
       = \sum_{i=1}^r \mu_i \sum_{p \mid q_i} b_p \log p
       = \sum_{i=1}^r \mu_i \log q_i \,.
  \]

  It remains to estimate the running time. The computation of~$\delta(x_1,x_2)$
  can be done in time $\ll \Mult(h(P))$;
  the same is true for the gcd computation and the division in step~1.
  The computations in steps 2 and~3 take negligible time compared to step~4.
  Each pass through the loop in step~4 takes time $\ll \Mult\bigl((m+2) \log D\bigr)$,
  so the total time for the loop is
  $\ll m \Mult(m (\log D)) \ll (\log\log D) \Mult((\log\log D) (\log D))$.
  The algorithm in~\cite{dcba2} computes suitable~$q_i$ for a pair
  $a, b$ of positive integers in time $\ll (\log ab)(\log\log ab)^{2}$.
  We iterate this algorithm,
  applying it first to $g_0$ and~$g_1$, then to each of the resulting $q_i$ and~$g_2$,
  and so on. Note that $g_n\le D$ for all $n$. Because there are always
  $\ll \log D$ terms in the sequence of~$q_i$'s, this leads to a contribution
  of $\ll \log D (\log\log D)^3$ for step~5. This is dominated by the time for the loop.
  The remaining steps take negligible time.
\end{proof}

In practice, the efficiency of this approach can be improved as follows:
\begin{enumerate}[$\bullet$]
  \item We trial factor $\Delta(W)$ up to some bound~$T$ to split off the contributions
        of all sufficiently small primes~$p$. We can then compute the corresponding~$\mu_p$
        using the algorithm of Proposition~\ref{P:fast algo} or the algorithm
        of~\cite{SilvermanHeights}, see Remark~\ref{R:silverman_algo}.
        In step~3, we can then set $B \colonequals \lfloor \log D'/\log T \rfloor$,
        where $D'$ is the unfactored part of~$D$.
        Note that in practice the trial division can take quite a bit more time
        than it saves, in particular when the equation has large coefficients,
        so this modification should be used with care.
  \item We update our list of `building blocks'~$q_i$ after each pass through the
        loop in step~4 using the new~$g_n$; we do the computation modulo suitable
        powers of the~$q_i$ instead of modulo~$D^{m+1} g_0$.
        We can also use separate values of $B$ and~$m$ for each~$q_i$,
        which will usually be smaller than those given above.
  \item In this way, we can integrate steps 4, 5 and~6 into one loop.
  \item We can replace $B^5$ in the definition of~$m$ by~$2B^4$. Then
        $\mu_p(P) \le b_p a + 1/(2B^3)$ and $a \le \mu_p(P)/b_p \le a + 1/(2b_pB^3)$.
        If $\mu_p(P)/b_p = r/s$ with $s \le B b_p$, then we have
        $a \le r/s \le a + 1/(2sB^2) \le a + 1/(2s^2)$.
        There can be at most one fraction~$r/s$ with $s \le B^2$ satisfying this:
        if $r'/s'$ is another such fraction, then
        \[ \frac{1}{ss'} \le \Bigl|\frac{r}{s} - \frac{r'}{s'}\Bigr| \le \frac{1}{2\min\{s,s'\}B^2} \,, \]
        which leads to the contradiction $\max\{s,s'\} \ge 2B^2$.
        We can then find $\mu_i = \mu_p(P)/b_p$ as the first convergent~$r/s$
        of the continued fraction expansion of~$a$ that is $\ge a$ and satisfies
        $r/s \le a + 1/(2sB^2)$.
\end{enumerate}

Combining Proposition~\ref{P:arch} and Proposition~\ref{P:nofact}, we finally obtain an efficient
algorithm for computing the canonical height~$\hat{h}(P)$ of a point $P \in E(\Q)$.

\begin{proof}[Proof of Theorem~\ref{T:main}]
 The first term is the time needed to compute~$h(P)$.
 The second term comes from the complexity bound for the computation of
 $\Psi^f(P)$ (using $\log D \ll \log \|W\|$) from Proposition~\ref{P:nofact}.
 The third term is the bound for the computation of $\Psi^\infty(P)$ given in
 Proposition~\ref{P:arch}.
\end{proof}


\section{Implementation and Examples} \label{examples}

We have implemented our algorithm using the computer algebra system {\sf Magma}~\cite{Magma}.
In the current implementation,
the factorization into coprimes in the algorithm preceding Proposition~\ref{P:nofact}
uses a relatively simple algorithm due to Buchmann
and Lenstra~\cite{BuchmannLenstra}*{Proposition~6.5} instead of the faster algorithm
of~\cite{dcba2} (or of~\cite{dcba}).
In practice, the running time of this part of the algorithm appears to be negligible.

Let us compare our implementation to {\sf Magma's} built-in command {\tt CanonicalHeight} (version~2.21-2).
The latter uses the method of Bost-Mestre for the computation of the archimedean local
height.
For the finite part of the height, a globally minimal Weierstrass model is computed.
The non-archimedean contributions are then computed separately using the algorithm
from~\cite{SilvermanHeights}; the relevant primes are found by factoring
$\gcd(\delta_1(x_1,x_2),\delta_2(x_1,x_2))$, where $(x_1,x_2)$ is a primitive pair of
Kummer coordinates for a point $P$.
The same strategy is currently used in {\tt Pari/GP}. The computer algebra system {\tt Sage}
contains an implementation of, essentially, Silverman's original algorithm
for the computation of canonical heights from~\cite{SilvermanHeights}; in particular, it
factors the discriminant.

\begin{ex}\label{E:ex1}
Consider the family $E_a$ of curves given by the Weierstrass equation
\[ W_a \colon y^2 = x^3 - ax + a \,, \]
where $a$ is an integer, and the non-torsion point $P = (1, 1)$ on $E_a$.
To compute $\hat{h}(P)$, {\sf Magma} needs to find a globally minimal model for $E_a$,
which boils down to deciding whether a sixth power of a prime divides $a$.
Hence, for random integers $a$ of large absolute value, the {\sf Magma} implementation
becomes slow.
For instance, taking $a$ to be {\tiny
5340200419833800017985460942490398389444339691251749039558531515293241873258929634112121245344691478},
which has~100 digits and is of the form $a = 2\cdot 37\cdot a'$ with $a'$ composite,
{\sf Magma}'s built-in {\tt CanonicalHeight} takes about an hour, but our implementation needs
only 0.001~seconds
to compute $\hat{h}(P)$ to~30 decimal digits of precision.
For these computations, and the ones below, we used a Dell Latitude~E7440 Laptop
with 8~GB of memory and an i5-4300U~CPU with two cores having 1.9~GHz each.
For $a$ equal to {\tiny
11564989338479595339888318793988161304389769478402845252925842502529380219520469639630008648580579144420
\\644034811856542472168315806833370153467480796669618513200953623811052728493745808300717019759850}, which has~200 digits and factors as $a = 2\cdot3^2\cdot5^2
\cdot a'$ with $a'$ composite, the computation of $\hat{h}(P)$ using our implementation
takes 0.003~seconds, whereas {\sf Magma} needs about 5~hours and 30~minutes.

Finally, we look at the~500-digit number  $a =\, ${\tiny
  28276805523181086329328141188416415606304708589734\\77817578971661824087775869298113031993537983620824509955240160299513508612337439203295411762778576874861\\6863628083464269023575658346783517541505391502873826466
  503688549496039448522504993529003411479688448361\\01223685296862173154902553901481398879346590153031505842226530360178416613777225501497807415587146715112\\586124106534351729435112961600134931787708117028525772977
  3270941059335530220433045635898507554473398924\\420918799034729911478550230429211184},
which factors as $a = 2^4\cdot 23\cdot71 \cdot a'$ with $a'$ composite.
Our implementation needs 0.009~seconds to compute $\hat{h}(P)$; {\sf Magma}'s {\tt
CanonicalHeight} did not terminate in~6 weeks.
For this $a$, the computation of the canonical height of $50P$, which has naive height
$h(50P) \approx 1437536.77$, took 0.215~seconds, whereas it took {\sf Magma} 2.83~seconds to even
compute $50P$!

For random $a$ having~5000 digits, the computation of $\hat{h}(P)$
to the standard precision of 30~decimal digits usually takes about 0.7~seconds.
Our implementation is usually faster than {\tt CanonicalHeight} if $a$ has at
least~18 decimal digits.
Note that in contrast to our implementation, the {\sf Magma} implementation of the algorithm of Bost-Mestre for
$\hat{\lambda}_\infty$ is written in {\sf C}.
\end{ex}


\begin{bibdiv}
\begin{biblist}

\bib{Bernardi}{article}{
    author = {Dominique Bernardi},
    title = {Hauteur p-adique sur les courbes elliptiques},
    book = {
      title = {Seminar on number theory Paris 1979--1980},
      series={Progr. Math.},
      volume={12},
      publisher={Birkh\"auser Boston, Boston, MA},
   },
   date = {1981},
 pages = {1--14},}

\bib{dcba}{article}{
  year={2005},
  author={Daniel J. Bernstein},
  title={Factoring into coprimes in essentially linear time},
  journal={Journal of Algorithms},
  volume={54},
  pages={1--30},
}

\bib{dcba2}{misc}{
  author={Daniel J. Bernstein},
  year={2004},
  title={Research announcement: Faster factorization into coprimes},
  note={Preprint}
}

\bib{Borwein}{book}{
   author={Borwein, Jonathan M.},
   author={Borwein, Peter B.},
   title={Pi and the AGM},
   series={Canadian Mathematical Society Series of Monographs and Advanced Texts, 4},
   publisher={John Wiley \& Sons, Inc., New York},
   date={1998},
}

\bib{Magma}{article}{
   author={Bosma, Wieb},
   author={Cannon, John},
   author={Playoust, Catherine},
   title={The Magma algebra system. I. The user language},
   journal={J. Symbolic Comput.},
   volume={24},
   date={1997},
   number={3-4},
   pages={235--265},
   url={See also the Magma home page at http://magma.maths.usyd.edu.au/magma/},
}

\bib{BMisog}{misc}{
author={Bost, Jean-Beno{\^{\i}}t},
author={Mestre, Jean-Fran{\c{c}}ois},
  year={1993},
  title={Calcul de la hauteur archim\'edienne des points d'une courbe elliptique par un
    algorithme quadratiquement convergent et application au calcul de la capacit\'e de
  l'union de deux intervalles},
  note={Unpublished Manuscript}
}

\bib{BradshawThesis}{thesis}{
  author={Robert W. Bradshaw},
   title={Provable Computation of Motivic $L$-functions},
   date={2010},
   organization={University of Washington},
   type={PhD thesis},
}

\bib{BuchmannLenstra}{article}{
   author={Buchmann, J. A.},
   author={Lenstra, H. W., Jr.},
   title={Approximating rings of integers in number fields},
   journal={J. Th\'eor. Nombres Bordeaux},
   volume={6},
   date={1994},
   number={2},
   pages={221--260},
   issn={1246-7405},
}

\bib{Cohen}{book}{
  author={Henri Cohen},
  title={A course in computational algebraic number theory},
  publisher={Springer-Verlag},
  date={1993},
}

\bib{CPS}{article}{
   author={Cremona, John},
   author={Prickett, Martin},
   author={Siksek, Samir},
   title={Height difference bounds for elliptic curves over number fields},
   journal={J. Number Theory},
   volume={116},
   date={2006},
   number={1},
   pages={42--68},
}

\bib{EverestWard}{article}{
   author={Everest, Graham},
   author={Ward, Thomas},
   title={The canonical height of an algebraic point on an elliptic curve},
   journal={New York J. Math.},
   volume={6},
   date={2000},
   pages={331--342},
}

\bib{FlynnSmart}{article}{
   author={Flynn, E. Victor},
   author={Smart, Nigel P.},
   title={Canonical heights on the Jacobians of curves of genus $2$ and the
   infinite descent},
   journal={Acta Arith.},
   volume={79},
   date={1997},
   number={4},
   pages={333--352},
}

\bib{Neron}{article}{
   author={N{\'e}ron, A.},
   title={Quasi-fonctions et hauteurs sur les vari\'et\'es ab\'eliennes},
   journal={Ann. of Math. (2)},
   volume={82},
   date={1965},
   pages={249--331},
   issn={0003-486X},
}

\bib{SilvermanHeights}{article}{
   author={Silverman, Joseph H.},
   title={Computing heights on elliptic curves},
   journal={Math. Comp.},
   volume={51},
   date={1988},
   number={183},
   pages={339--358},
}

\bib{ATAEC}{book}{
   author={Silverman, Joseph H.},
   title={Advanced topics in the arithmetic of elliptic curves},
   series={Graduate Texts in Mathematics},
   volume={151},
   publisher={Springer-Verlag, New York},
   date={1994},
}

\bib{SilvermanLittleFact}{article}{
   author={Silverman, Joseph H.},
   title={Computing canonical heights with little (or no) factorization},
   journal={Math. Comp.},
   volume={66},
   date={1997},
   number={218},
   pages={787--805},
}

\bib{TschoepeZimmer}{article}{
   author={Tsch{\"o}pe, Heinz M.},
   author={Zimmer, Horst G.},
   title={Computation of the N\'eron-Tate height on elliptic curves},
   journal={Math. Comp.},
   volume={48},
   date={1987},
   number={177},
   pages={351--370},
}

\bib{Zimmer}{article}{
author={Zimmer, Horst G.},
title={A limit formula for the canonical height of an elliptic curve and
its application to height computations},
conference={
title={Number theory},
address={Banff, AB},
date={1988},
},
book={
publisher={de Gruyter, Berlin},
},
date={1990},
pages={641--659},
}

\end{biblist}
\end{bibdiv}

\end{document}